\documentclass[11pt]{article}

\usepackage[utf8]{inputenc}
\usepackage[T1]{fontenc}
\usepackage{mathpazo}
\usepackage{tikz}

\RequirePackage[english]{babel}

\RequirePackage[a4paper,margin=2.5cm]{geometry}
\RequirePackage{parskip}

\newcommand{\affiliation}{\footnote}
\makeatletter
\def\@fnsymbol#1{\ensuremath{\ifcase#1\or *\or \dagger\or \ddagger\or \mathsection\or \|\or **\or \dagger\dagger \or \ddagger\ddagger \else\@ctrerr\fi}}
\makeatother

\usepackage{xcolor}
\definecolor{cblue}{RGB}{0,70,140}
\definecolor{cgreen}{RGB}{100,140,0}
\definecolor{cred}{RGB}{190,10,50}

\usepackage[shortlabels]{enumitem}
\setlist{itemsep=0ex,topsep=0ex,parsep=0.4ex}

\usepackage{amsmath,amsfonts,amssymb,amsthm,mathtools,thmtools,thm-restate,bbm,centernot}

\usepackage[hyphens]{url} 
\usepackage[hidelinks,colorlinks,pagebackref]{hyperref} 
\hypersetup{citecolor=cgreen,linkcolor=cblue,urlcolor=cblue}
\usepackage[capitalise,nameinlink,noabbrev,compress]{cleveref} 

\renewcommand*{\backref}[1]{}
\renewcommand*{\backrefalt}[4]{
	\ifcase #1 Not cited.%
	\or $\uparrow$#2%
	\else $\uparrow$#2%
	\fi%
}

\theoremstyle{plain}
\newtheorem{theorem}{Theorem}[section]
\newtheorem{lemma}[theorem]{Lemma}

\theoremstyle{definition}

\makeatletter
\renewenvironment{proof}[1][\proofname]
{\par\pushQED{\qed}
	\normalfont\topsep6\p@\@plus6\p@\relax\trivlist
	\item[\hskip\labelsep\bfseries#1\@addpunct{.}]
	\ignorespaces}
{\popQED\endtrivlist\@endpefalse}
\makeatother

\newcommand{\cH}{\mathcal{H}}
\newcommand{\R}{\mathcal{R}}
\newcommand{\bN}{\mathbb{N}}

\newcommand{\Forb}{{\rm Forb}}

\newcommand{\Rk}{\R_{ind}}

\usepackage[T1]{fontenc}
\usepackage{lmodern}
\usetikzlibrary{decorations.pathreplacing}
\definecolor{pink}{RGB}{185,0,122}
\definecolor{palepink}{RGB}{235,195,231}
\definecolor{cyanframe}{RGB}{132,218,218}
\definecolor{grayvertex}{RGB}{105,105,105}
\definecolor{lavender}{RGB}{226,224,251}
\definecolor{chosen}{RGB}{230,25,35}
\tikzset{vertex/.style={circle,inner sep=1.65pt,fill=grayvertex},
  chosen/.style={vertex,fill=chosen,inner sep=2.1pt},
  attachment/.style={draw=gray!55,line width=.45pt}}
\newcommand{\Fshape}[2]{%
 \fill[#1,rounded corners=12pt] (-.73,-1.48) rectangle (.85,1.50);
 \fill[#2,opacity=.32] (.24,.67) ellipse [x radius=.49,y radius=.72];
 \fill[#2,opacity=.32] (.34,-.08) ellipse [x radius=.45,y radius=.61];
 \fill[#2,opacity=.32] (.23,-1.03) ellipse [x radius=.40,y radius=.32];
 \node[font=\Large] at (-.45,-.18) {$F$};
}
\newcommand{\Fpoints}[2]{%
 \foreach \i/\x/\y in {1/.05/1.05,2/.38/.82,3/.28/.30,4/.48/-.13,5/.27/-.43,6/.12/-.91,7/.35/-1.15}{%
  \coordinate (#1-\i) at (\x,\y);
  \node[vertex,fill=#2] at (#1-\i) {};
 }
}

\title{Non-isomorphic graphs have distinct vertex-Ramsey classes}
\author{Maria Axenovich and Patrick Schmitteckert \affiliation{Institute of Algebra and Geometry, Karlsruhe Institute of Technology, Germany (\textsf{\href{mailto:maria.aksenovich@kit.edu}{maria.aksenovich@kit.edu}, \textsf{\href{mailto:patrick@schmitteckert.org}{patrick@schmitteckert.org})}}}}
\date{\today}

\begin{document}
    \maketitle

    \begin{abstract}
        For a graph $H$,  its $k$-colour  vertex Ramsey class is the set of all graphs $G$ such that any colouring of the  vertices of $G$ in $k$ colours results in a monochromatic (induced) copy of $H$. 
        We prove that for any $k$,  Ramsey classes of any  non-isomorphic graphs are distinct. 
            \end{abstract}
    
    \section{Introduction}
  For a graph $H$ and $k\in \bN$, let  the {\it $k$-colour vertex Ramsey class} for $H$, denoted $\R^k(H)$,  be the set of all graphs  $G$ such that for any colouring of the vertices of $G$ in $k$ colours there is a subgraph isomorphic to $H$, that we call a {\it copy of $H$},  with all vertices of the same colour.   Similarly, we define  $\R_{ind}^k(H)$ if the copy of $H$ in the above definition is an induced subgraph of $G$.  
  The fact that  $\Rk^k(H)$ is nonempty for every graph $H$ was first shown by Folkman \cite{Folk}, see also \cite{BR, DHKZ, LRV, NR} for results on vertex Ramsey theory. 
  In this note we prove that any two non-isomorphic graphs have different $k$-colour Ramsey classes. Here, we assume that all graphs considered   have at least one vertex.
 
 \begin{theorem}\label{main}
 For any two   non-isomorphic graphs  $H_1$ and $H_2$ and  any  positive integer $k$,   $\R^k(H_1) \neq \R^k(H_2)$ and $\R_{ind}^k(H_1) \neq \R_{ind}^k(H_2)$.
 \end{theorem}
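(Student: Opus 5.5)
Suppose $\R^k(H_1)=\R^k(H_2)$ (respectively the induced classes coincide); the plan is to derive $H_1\cong H_2$. Note first that if $H\subseteq G$ (respectively induced) and $G\in\R^k(H')$, then any $k$-colouring of $G$ yields a monochromatic copy of $H'$. The cleanest route is to find, for a given $H_1$, a graph $G\in \R^k(H_1)$ that is \emph{minimal} in a strong sense: every graph $H_2$ with $G\in\R^k(H_2)$ must contain $H_1$ (as a subgraph, or induced subgraph in the induced case). Symmetrically $H_2$ then contains $H_1$, and $H_1$ contains $H_2$, forcing isomorphism since the graphs are finite. So the key statement to prove is:
\begin{quote}
for every $H$ and $k$ there is $G\in\R^k(H)$ such that every graph $H'$ with $G\in\R^k(H')$ contains a copy of $H$ (induced, in the induced setting).
\end{quote}
Actually it suffices to show weaker facts. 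First, $|V(H_1)|=|V(H_2)|$: if $|V(H_1)|<|V(H_2)|$, take the graph $G$ that is the vertex-disjoint union of $k(|V(H_1)|-1)+1$ copies of $H_1$ arranged suitably. More robustly, take any $G\in\R^k(H_1)$ and delete vertices to get a vertex-minimal $G$; we want it not in $\R^k(H_2)$. I would instead use a direct construction.

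Concrete plan. Take $G\in\R^k(H_1)$ (exists by Folkman) and form $G^*$ = the disjoint union of $G$ with nothing extra; the issue is to produce a colouring of any graph in $\R^k(H_2)$ avoiding monochromatic $H_1$. Alternative order: compare by the pair (number of vertices, number of edges) lexicographically, and WLOG $H_2$ is not a subgraph (resp.\ induced subgraph) of $H_1$ — possible since non-isomorphic graphs cannot each contain the other. Now build $G\in\R^k(H_1)$ whose every copy of $H_2$ is "forced" to be rainbow-colourable: i.e.\ a graph $G\in\R^k(H_1)$ together with a $k$-colouring in which no copy of $H_2$ is monochromatic. To get this, use an amalgamation/partite construction: start from a Folkman graph $F\in\R^k(H_1)$, and replace it by the graph $G$ obtained by taking, for each copy of $H_1$ in $F$ (the picture with $F$ and highlighted copies suggests this), a fresh copy of a graph $K\in\R^k(H_1)$ of high girth in the "copies hypergraph" sense (Nešetřil–Rödl type sparse Ramsey graphs), so that every copy of $H_2$ in $G$ lies essentially inside one copy of $H_1$-amalgam. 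Since $H_2\not\subseteq H_1$, copies of $H_2$ must use edges across different $H_1$-copies, and sparseness (the hypergraph of $H_1$-copies has girth larger than $|V(H_2)|$) should allow a greedy colouring making every such copy non-monochromatic while still $G\in\R^k(H_1)$.

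The main obstacle is exactly this step: constructing $G\in\R^k(H_1)$ with a $k$-colouring (even a 2-colouring suffices, as $k\ge2$; the case $k=1$ is trivial since then $\R^1(H)$ is the set of graphs containing $H$, and minimal members are $H$ itself) with no monochromatic $H_2$. I would try the Nešetřil–Rödl sparse Ramsey theorem: there is $G\in\R^k(H_1)$ in which the hypergraph of copies of $H_1$ has girth $>|V(H_2)|$, so every subgraph of $G$ on at most $|V(H_2)|$ vertices is a forest-like amalgamation of partial copies of $H_1$ glued along at most one vertex each step (in the induced case, edges all lie inside copies). A copy of $H_2$ not contained in one $H_1$-copy then spans a tree-like structure of $\ge2$ copies, and I would colour $G$ so that each copy of $H_1$ gets both colours on its intersection pattern with such structures — ordering along the tree structure and handling the case $|V(H_2)|\le|V(H_1)|$, $H_2$ not in $H_1$, carefully.
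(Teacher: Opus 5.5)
Your proposal has a genuine gap: the separating construction is never supplied, and the routes you sketch do not work. The opening ``key statement'' is false. Indeed, $G\in\R^k(H)$ implies $G\in\R^k(H')$ for every subgraph $H'$ of $H$ (e.g.\ $H'=K_1$), and such an $H'$ does not contain $H$. All that is actually needed is, for a suitably oriented pair, one graph in $\R^k(H_1)\setminus\R^k(H_2)$. Your disjoint union of copies of $H_1$ for unequal orders also fails: for connected $H_1$ with an edge and $k\ge 2$, each copy can be coloured non-monochromatically. The paper's non-induced argument uses dense graphs instead. It takes $K_{k(n-1)+1}$ when $n=|H_1|<|H_2|$, and $H_2\vee K_{(k-1)(n-1)}$ when $|H_1|=|H_2|=n$ and $H_1\not\subseteq H_2$.

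The step you call the main obstacle is where the proof would have to happen, and sparseness does not deliver it. First, the sparse theorem you quote, with the hypergraph of \emph{all} copies of $H_1$ of large girth, is false when $H_1$ has a cut vertex. For $H_1=K_{1,3}$, requiring that no two copies share two vertices forces maximum degree at most $3$. Such a graph has a $2$-colouring in which every vertex has at most one neighbour of its own colour (take a maximum cut), so it is not in $\R^2(K_{1,3})$. What does exist is Construction 1, where only a designated family of copies is sparse. But then a colouring with no monochromatic $H_2$ need not exist. Take $k=2$, $H_1=K_{1,3}$, $H_2=P_4$ (so $H_2\not\subseteq H_1$), and the graph $G$ of Construction 1 for $K_{1,3}$ built on a $4$-uniform $\cH$ of girth at least $4$ with $\chi(\cH)\ge 5$. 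Then $G$ is triangle-free, and $\chi(G)\ge\chi(\cH)\ge 5$ because every hyperedge spans an edge. A $2$-colouring without a monochromatic $P_4$ would make both classes triangle-free graphs with no path on four vertices, i.e.\ star forests, giving $\chi(G)\le 4$. So $G\in\R^2(H_1)\cap\R^2(H_2)$: girth conditions control small configurations, not global colourability.

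For the induced case you have nothing concrete, and this is where the paper's real work lies. The repetition lemma plus complementation reduce to connected graphs. The pair is oriented by the largest block order $\beta$ and then by order. An independent set $I$ meeting every block of $H_2$ gives $F=H_2-I$ with $\beta(F)<\beta(H_1)$, so Construction 1 yields $A\in\Forb(H_1)\cap\Rk^k(F)$. Attaching copies of $A_{r-1}$ to every induced copy of $F$, one per $u\in I$, then gives, by induction on the number of colours, $A_k\in\Rk^k(H_2)\setminus\Rk^k(H_1)$. There the sparse graph is used to exclude $H_1$, not to be coloured cleverly.
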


The study of Ramsey classes is a fundamental area in discrete mathematics addressing quantitative and structural properties of graphs in these classes.
Most existing  research  on Ramsey classes has concerned the setting of  edge colourings. A main motivation of this note is a question by  Fox,  Grinshpun, Liebenau, Person, and Szab\'o \cite{F}, whether any two connected non-isomorphic graphs have distinct edge Ramsey classes. While this question remains open in general,  we answer the  analogous question in the positive  for vertex colourings and all, not only connected, graphs. In Section \ref{sec:notations} we give some notations and needed Ramsey-type properties. Theorem \ref{main} is proved in Section \ref{proof-main}.

 \section{Notation and Ramsey properties}\label{sec:notations}
Let $H=(V, E)$ be a graph. We denote its order  $|V(H)|$ by $|H|$, its size $|E(H)|$ by $||H||$, its complement by $\overline{H}$, and 
its independence number by  $\alpha(H)$. If $||H||\geq 1$, let $\beta(H)$ be the largest order of a block of $H$, that is, a maximal 2-connected subgraph or a bridge; if $||H||=0$, let $\beta(H)=1$. For two graphs $G$ and $H$, let $G\vee H$ denote their join, that is a graph obtained as a union of vertex-disjoint copies of $G$ and $H$ together with all possible edges between these two copies.
 We denote the set of all graphs with no induced copy of $H$ by $\Forb(H)$.

 {\bf Construction 1.} Let $H$ be a graph and $q$ be an integer at least $3$.  Consider an  $|H|$-uniform hypergraph $\cH$ of girth at least $q$ and chromatic number at least $k+1$. The existence of such a graph was first shown by Erd\H{o}s and Hajnal \cite{EH}.
 Let $G=G(H, q, k)$ be a graph on vertex set $V(\cH)$ that is a union of $|E(\cH)|$ copies of $H$ induced by respective hyperedges of $\cH$. 
 We have that $G\in \Rk^k(H)$ since any vertex-colouring of $G$ in $k$ colours  and thus of $\cH$ results in a monochromatic hyperedge of $\cH$, i.e., a monochromatic induced copy of $H$. Note that this copy is indeed induced since the girth condition ensures that any two hyperedges of $\cH$ intersect in at most one vertex.

The definition of $\Rk^k$ implies that  $\R_{ind}^k(H_1) \neq \R_{ind}^k(H_2)$ if and only if $\R_{ind}^k(\overline{H}_1) \neq \R_{ind}^k(\overline{H}_2)$.
Using Construction 1 with sufficiently large girth and complementation we see  that for any graph $H$  and any positive integer $k$, there is $G \in \R_{ind}^k(H)$ such that $\alpha(H)=\alpha(G)$.

For a graph $A$, we denote a pairwise vertex-disjoint union of $q$ copies of $A$ by $qA$.  For $k\geq 1$, we say that a graph $H$ has  the  {\it Ramsey repetition property} if for any  integer $q \geq1$ and every graph $A$,   $ qA\in\Rk^k(H)$ if and only if $A\in\Rk^k(H).$
 \begin{lemma}\label{repetition}
For any $k\geq 1$, a  graph has the Ramsey repetition property if and only if it is connected.
\end{lemma}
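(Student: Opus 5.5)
We use the definitions as stated, so the copies are induced. The same argument should work verbatim for $\R^k$. We prove the two directions separately.

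\emph{Connected implies the repetition property.} Let $H$ be connected, $q\ge 1$, and $A$ any graph. If $A\in\Rk^k(H)$, then $qA\in\Rk^k(H)$: any $k$-colouring of $qA$ restricts to a $k$-colouring of one copy of $A$, which is an induced subgraph of $qA$. Conversely, suppose $A\notin\Rk^k(H)$ and fix a $k$-colouring of $A$ with no monochromatic induced copy of $H$. Use this colouring on every copy of $A$ in $qA$. An induced copy of $H$ in $qA$ is connected, so it lies inside a single copy of $A$. Hence $qA$ has no monochromatic induced copy of $H$, and $qA\notin\Rk^k(H)$.

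\emph{Disconnected implies failure of the property.} Let $H$ have components $C_1,\dots,C_c$ with $c\ge 2$. We must find $A\notin\Rk^k(H)$ and $q$ with $qA\in\Rk^k(H)$.

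The gluing step goes as follows. Let $F=C_1\vee C_2\vee\dots\vee C_c$. This is a connected graph containing every $C_j$ as an induced subgraph. Take any $A\in\Rk^k(F)$, for instance from Construction 1, and set $q=k(c-1)+1$. In a $k$-colouring of $qA$, each copy of $A$ contains a monochromatic induced copy of $F$. By pigeonhole, some colour is used for these copies in at least $c$ distinct copies of $A$. From the $j$-th of these copies of $A$ take the induced copy of $C_j$ inside its monochromatic $F$. Distinct copies of $A$ are disjoint and have no edges between them. So the union of these $C_j$'s is a monochromatic induced copy of $H$, and $qA\in\Rk^k(H)$.

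The main obstacle is choosing $A$ so that additionally $A\notin\Rk^k(H)$. I would first try a minimality argument. Let $n$ be the minimum order of a graph in $\Rk^k(H)$. Then it suffices to find a graph $A$ of order less than $n$ such that some copy of $q$ translates of $A$ is Ramsey for $H$. By the gluing step, it is enough that $A$ is simultaneously Ramsey for every component $C_j$. Indeed, the pigeonhole step only needs each copy of $A$ to contain a monochromatic induced $C_j$ in a common colour. That variant works by taking $q=k\,c$ copies and finding, for each $j$ in turn, a fresh copy coloured with the same colour.

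Concretely, take a minimum graph $M\in\Rk^k(H)$ and a vertex $v$. I would try to show that $M-v$ is still Ramsey for each single component $C_j$. The idea is that in a bad colouring of $M-v$ for some $C_j$, one could extend the colouring to $v$ and use the remaining components to contradict $M\in\Rk^k(H)$. This exploits that $H$ has at least two components, so one copy of a component is ``cheaper'' than all of $H$. If this fails in general, the fallback is to compare a graph parameter such as $\alpha$, or $\beta$ from Section~\ref{sec:notations}, between a Construction~1 graph for a largest component and graphs in $\Rk^k(H)$. This step is where I expect the real work to lie.
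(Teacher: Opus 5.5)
Your connected direction and your gluing step (with $F=C_1\vee\dots\vee C_c$ and $q=k(c-1)+1$) are exactly the paper's argument. The gap is the step you flag yourself: you never produce an $A\in\Rk^k(F)$ with $A\notin\Rk^k(H)$. ``Take any $A\in\Rk^k(F)$'' cannot work, since $qA$ is itself in $\Rk^k(F)$ and, by your gluing step, also in $\Rk^k(H)$. The minimality route is not carried out. Worse, the reduction it rests on is false as you intend to use it. Being Ramsey for each component $C_j$ \emph{separately}, which is what you propose to prove for $M-v$, does not give monochromatic copies of the different components in a \emph{common} colour, so the $q=kc$ pigeonhole variant fails. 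For instance, take $k=2$ and $H=K_3\cup P_3$, where $P_3$ is the path on three vertices. Let $A=K_5\cup A'$, where $A'\in\Rk^2(P_3)$ is triangle-free (Construction~1 with girth at least $4$). Then $A$ is Ramsey for $K_3$ and for $P_3$. Now colour three vertices of $K_5$ and one vertex of $A'$ red and everything else blue, and repeat this on every copy. In $qA$ the red class is a disjoint union of triangles and isolated vertices, and the blue class is triangle-free. Hence $qA\notin\Rk^2(H)$ for every $q$.

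The missing idea is the remark in the notation section: there is $A\in\Rk^k(F)$ with $\alpha(A)=\alpha(F)$. To get it, apply Construction~1 to $\overline{F}$ with girth at least $4$, so that every clique lies inside a single hyperedge, and take the complement. An independent set in a join lies within one part, so $\alpha(F)=\max_j\alpha(C_j)$. On the other hand, $\alpha(H)=\sum_j\alpha(C_j)>\max_j\alpha(C_j)$ because $c\geq 2$. Hence $A$ contains no induced copy of $H$ at all, so trivially $A\notin\Rk^k(H)$, and your gluing step gives $qA\in\Rk^k(H)$. Your fallback of comparing $\alpha$ points in this direction. However, the comparison must be made for a graph that is Ramsey for the join $F$, which is what the gluing needs, not for a Construction~1 graph of a single component. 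It also needs the complementation to keep $\alpha$ small; a plain Construction~1 graph typically has large independence number.
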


\begin{proof}
Fix $k\geq 1$.

Assume that $H$ is connected. Let $q\geq1$. If $A\in\Rk^k(H)$, then clearly $q A\in\Rk^k(H)$.
If $A\not\in\Rk^k(H)$, repeat a colouring of $A$ without a monochromatic $H$ 
on its disjoint copies. This implies that $q A\not \in\Rk^k(H)$. Therefore $H$ has a Ramsey repetition property.

Let $H$  be disconnected and have components $H_1,\ldots,H_t$, where $t\geq2$. Let
$L=H_1\vee\cdots\vee H_t$. Choose
$A\in\Rk^k(L)$ with $ \alpha(A)=\alpha(L)$.     Since $\alpha(A)=\alpha(L)=\max_i\alpha(H_i)<\sum_i\alpha(H_i)=\alpha(H)$,
$A$ contains no induced $H$, thus  $A\not\in \Rk^k(H)$. 
Consider  an arbitrary $k$-vertex colouring of $qA$, for $q=k(t-1)+1$.
Each copy of $A$ contains a  monochromatic
$L$, and $t$ of these have the same colour. Taking $H_i$ from the $i$th
of these $t$ distinct copies gives a monochromatic induced $H$.
Therefore $qA \in \Rk^k(H)$.
Thus $H$ has no Ramsey repetition property.
\end{proof}

   \section{Proof of Theorem \ref{main}}\label{proof-main}
   Let $H_1$ and $H_2$ be two non-isomorphic graphs. If $k=1$, we have that $H_i\in \Rk^1(H_i)$, $i\in \{1,2\}$,  thus $\Rk^1(H_1)\neq \Rk^1(H_2)$ and  $\R^1(H_1)\neq \R^1(H_2)$.
   So, we assume that  $k\geq 2$.
   
   We shall first show that $\R^k(H_1) \neq \R^k(H_2)$. If $k=1$, we have that $H_i\in \R^k(H_i)$, thus $\R^k(H_1)\neq \R^k(H_2)$. Let $k\geq 2$.
   Assume first that $|H_1|\neq |H_2|$.  If $n=|H_1|<|H_2|$, let  $G=K_{k(n-1)+1}$.  By pigeonhole principle we see that $G\in \R^k(H_1)$. On the other hand, $G\not\in \R^k(H_2)$ because one can assign $k$ colours to the vertices of $G$ such that each colour class has size at most $n$.
   Now, assume that $|H_1|=|H_2|=n$. Since $H_1$ and $H_2$ are not isomorphic, without loss of generality assume that $H_1$ is not a subgraph of $H_2$.
   Let $G=H_2\vee K_{(k-1)(n-1)}$. To see that  $G\in \R^k(H_2)$, consider an arbitrary vertex-colouring of $G$. At least $n$ vertices are of the same colour, say red. Then the set of $n$ red vertices induces 
   a graph $G''=G'\vee K_{n-\ell}$, for some $\ell\geq 0$, where $G'$ is a subgraph of $H_2$ on $\ell$ vertices. Thus, $G''$ contains $H_2$ as a subgraph.
   On the other hand, by colouring the vertices of $G$ inducing  $H_2$ red and colouring  the remaining vertices in other, $k-1$ colours, such that each of these colours is present on $(n-1)$ vertices,  one has no monochromatic copy of $H_1$.\\
 
 Next, we shall prove that $\R_{ind}^k(H_1) \neq \R_{ind}^k(H_2)$.  Assume otherwise, i.e., that $\R_{ind}^k(H_1) = \R_{ind}^k(H_2)$.
 By Lemma \ref{repetition} we see  that both $H_1$ and $H_2$ are connected or both of them disconnected. By complementation, we can  assume that 
both of them are connected. If one of $H_1$ or $H_2$, say $H_1$, is a single vertex graph, $K_1$,  then $K_1\in \Rk^k(H_1)\setminus \Rk^k(H_2)$, so we are done.  We assume that $H_1$ and $H_2$ are  non-isomorphic connected graphs each containing at least one edge. 

Observe that if $H$ is connected and has an edge, there is an independent set
$I\subseteq V(H)$ such that $\beta(H-I)<\beta(H)$.
Indeed, we can greedily select such an $I$ by starting with some block as a root  in the block-cutvertex tree and processing its blocks away from the root.
Choose one vertex in the root block. For each subsequent block, if its
attachment vertex has already been chosen, choose nothing; otherwise
choose any other vertex in that block. A new vertex has no earlier
neighbours except possibly the attachment vertex, so the chosen set $I$
is independent and meets every block.

 Assume without loss of generality that $\beta(H_1)\geq \beta(H_2)$ and if $\beta(H_1)=\beta(H_2)$, then 
$|H_1|\geq |H_2|$. In particular  $H_2\in \Forb(H_1)$. Choose an independent set $I$ of $H_2$ such that for $F=H_2-I$, 
 $\beta(F)<\beta(H_2)\leq\beta(H_1)$.

 \begin{figure}
\begin{center}
\scalebox{0.7}{

\begin{tikzpicture}[font=\small]
\begin{scope}[shift={(-4.6,0)},scale=1.0]
  \Fshape{gray!20}{black}
  \Fpoints{h}{grayvertex}
  \draw[gray!55,line width=2.3pt,rounded corners=10pt]
    (1.17,-1.52) rectangle (2.20,1.69);
  \node[font=\Large,text=black!70] at (1.86,-.58) {$I$};
  \coordinate (u1) at (1.46,1.24);
  \coordinate (u2) at (1.80,.04);
  \coordinate (u3) at (1.52,-1.18);
  \foreach \i in {1,2,3} {\draw[black!65,line width=.9pt] (h-\i)--(u1);}
  \foreach \i in {3,4,5} {\draw[black!65,line width=.9pt] (h-\i)--(u2);}
  \foreach \i in {6,7} {\draw[black!65,line width=.9pt] (h-\i)--(u3);}
  \foreach \i in {1,2,3} {\node[vertex,fill=black!20] at (u\i) {};}
  \draw[black!65,thick,decorate,decoration={brace,mirror,amplitude=9pt}]
    (-.82,-1.83)--(2.30,-1.83) node[midway,below=13pt,font=\Large,text=black] {$H$};
\end{scope}

\draw[black!25,line width=3pt,rounded corners=17pt]
  (-.1,-4.06) rectangle (5.0,3.64);
\node[font=\Huge,text=black!70] at (.35,-3.63) {$A$};

\begin{scope}[shift={(1.55,2.03)},scale=.83]
  \Fshape{gray!35}{black}
  \Fpoints{a}{grayvertex}
\end{scope}
\begin{scope}[shift={(3.17,-.35)},scale=.83]
  \Fshape{gray!35}{black}
  \Fpoints{b}{chosen}
\end{scope}
\begin{scope}[shift={(1.55,-2.77)},scale=.83]
  \Fshape{gray!35}{black}
  \Fpoints{c}{grayvertex}
\end{scope}

\node at (.94,-.12) {$\vdots$};
\node at (2.53,.03) {$\vdots$};
\node at (2.50,-1.84) {$\vdots$};

\foreach \prefix/\xx/\yone/\ytwo/\ythree in {
  a/6.35/4.64/3.54/2.44,
  b/7.90/1.10/-.10/-1.30,
  c/6.35/-2.80/-3.90/-5.00}{
  \foreach \idx/\yy in {1/\yone,2/\ytwo,3/\ythree}{
    \fill[gray!20,rounded corners=9pt] (\xx-.76,\yy-.43) rectangle (\xx+.90,\yy+.43);
    \node[font=\Large] at (\xx+.56,\yy) {$A_{r-1}$};
    \foreach \j/\dx/\dy in {1/-.49/.18,2/-.20/.08,3/-.44/-.20,4/.05/-.13}{
      \coordinate (\prefix-H\idx-\j) at (\xx+\dx,\yy+\dy);
    }
  }
  \foreach \s in {1,2,3}{\foreach \t in {1,2,3,4}{\draw[black!50] (\prefix-\s)--(\prefix-H1-\t);}}
  \foreach \s in {3,4,5}{\foreach \t in {1,2,3,4}{\draw[black!50] (\prefix-\s)--(\prefix-H2-\t);}}
  \foreach \s in {6,7}{\foreach \t in {1,2,3,4}{\draw[black!50] (\prefix-\s)--(\prefix-H3-\t);}}
  \foreach \idx in {1,2,3}{\foreach \j in {1,2,3,4}{\node[vertex] at (\prefix-H\idx-\j) {};}}
}

\node[chosen] at (b-H1-3) {};
\node[chosen] at (b-H2-1) {};
\node[chosen] at (b-H3-4) {};

\foreach \i in {1,...,7}{
  \node[vertex] at (a-\i) {};
  \node[chosen] at (b-\i) {};
  \node[vertex] at (c-\i) {};
}

\node at (7.70,1.97) {$\vdots$};
\node at (7.70,-2.17) {$\vdots$};
\end{tikzpicture}}
\end{center}
\caption{Construction of the graph $A_r$, $r\geq 2$ and a red copy of $H=H_2$.}
\label{figure}
\end{figure}

 First, we shall construct a graph $A\in \Forb(H_1)\cap \Rk^k(F)$.
 If $F$ has no edges, let $A$ be an edgeless graph on $k(|F|-1)+1$ vertices.
 If $F$ has edges, then $\beta(F)\geq 2$, thus $\beta(H_1)\geq 3$. Let $B$ be a block of $H_1$ on $\beta(H_1)$ vertices.
 Apply Construction 1 to build a graph $A=G(F, ||B||+2, k)$.
Since the girth of the underlying hypergraph is at least $||B||+2$ and $B$ is not contained in copies of $F$ placed in hyperedges of that hypergraph,  $A\in \Forb(B)$, thus $A\in \Forb(H_1)$.   By the property of Construction 1, $A\in \Rk^k(F)$. Thus $A\in \Rk^i(F)$ for all $i\in \{2,\dots,k\}$.

Next,  we recursively define graphs  $A_r$ for $r\in \{1,\dots, k\}$ such that $A_1:=H_2$ and for $r\geq 2$, $A_{r}\in \R_{ind}^{r}(H_2) \setminus \R_{ind}^{r}(H_1)$.
For $r\geq 2$,  we construct the graph $A_r$ based on $A_{r-1}$ as follows. Start with $A$. For every induced copy $\varphi(F)$ of
$F$ in $A$ and every $u\in I$, add a  copy $H_{\varphi,u, r-1}$ of $A_{r-1}$, such that these copies and $A$ are all pairwise vertex-disjoint. 
Make each vertex of $H_{\varphi,u,r-1 }$ adjacent to exactly those vertices of  $\varphi(F)$ that  correspond to the neighbours of $u$ in $H_2$, see Figure \ref{figure}.

We have that $A_r\not\in\Rk^r(H_1)$. Indeed, colour the vertices of $A$ red and the remaining vertices in other $r-1$ colours such that each copy of $A_{r-1}$ has no monochromatic induced copy of $H_1$. This colouring has no induced monochromatic copy of $H_1$.

On the other hand, $A_r\in \Rk^r(H_2)$.  To see this, consider an arbitrary  $r$-colouring of $V(A_r)$.  If some  added copy $H_{\varphi, u, r-1}$ of $A_{r-1}$ uses  at most  $r-1$ colours, it has a monochromatic induced copy of $H_2$. Otherwise, each such copy has all $r$ colours on its vertices.  Choose a monochromatic, say red,  induced  copy $\varphi(F)$ of $F$  in $A$,  and for each $u\in I$ choose a red
vertex of $H_{\varphi,u, r-1}$. Together these vertices and $\varphi(F)$ induce a red copy of $H_2$.

In particular, $A_{k}\in \R_{ind}^{k}(H_2) \setminus \R_{ind}^{k}(H_1)$, implying that  $\Rk^k(H_1)\neq \Rk^k(H_2)$ and  contradicting our assumption.
\qed

    \textbf{Acknowledgements.}  The proofs were obtained by alternating the authors' and ChatGPT 6.0's arguments.  The paper is written and checked by the authors.
 {\hypersetup{linkcolor=cred}
 \newcommand{\etalchar}[1]{$^{#1}$}


\begin{thebibliography}{99}
 
\bibitem{BR}
J.~I.~Brown and V.~R\"odl,
\newblock A Ramsey type problem concerning vertex colourings,
\newblock {\em J.~Combin. Theory Ser. B} {\bf 52} (1991), 45--52.


\bibitem{DHKZ}
S.~Diskin, I.~Hoshen, M.~Krivelevich, and M.~Zhukovskii,
\newblock On vertex Ramsey graphs with forbidden subgraphs,
\newblock {\em Discrete Math.} {\bf 347} (2024), 113806.


\bibitem{EH}
P.~Erd\H{o}s and A.~Hajnal,
On chromatic number of graphs and set-systems,
Acta Mathematica Academiae Scientiarum Hungaricae
\textbf{17} (1966), 61--99.

\bibitem{Folk}
J.~Folkman,
\newblock Graphs with monochromatic complete subgraphs in every edge coloring,
\newblock {\em SIAM J. Appl. Math.} {\bf 18} (1970), 19--24.

 \bibitem{F} J. Fox, A. Grinshpun, A. Liebenau, Y. Person,  and T. Szab\'o, What is Ramsey-equivalent to a clique?, {\em J.~Combin. Theory Ser. B} 109 (2014), 120--133.

\bibitem{LRV}
T.~\L uczak, A.~Ruci\'nski, and B.~Voigt,
\newblock Ramsey properties of random graphs,
\newblock {\em J.~Combin. Theory Ser. B} {\bf 56} (1992), 55--68.

\bibitem{NR}
J.~Ne\v{s}et\v{r}il and V.~R\"odl,
\newblock Partitions of vertices,
\newblock {\em Comment. Math. Univ. Carolin.} {\bf 17} (1976), 85--95.

\end{thebibliography}
\end{document}